\title{Moving homology classes in finite covers of graphs}
\author{Benson Farb and Sebastian Hensel \thanks{The first author
    gratefully acknowledges support from the National Science
    Foundation.} }
\theoremstyle{plain}
\newtheorem{theorem}{Theorem}[section]
\newtheorem{lemma}[theorem]{Lemma}
\newtheorem{claim}[theorem]{Claim}
\newtheorem{observation}[theorem]{Observation}
\theoremstyle{definition}
\newcommand{\nc}{\newcommand}
\nc{\dmo}{\DeclareMathOperator}
\nc{\Q}{\mathbb{Q}}
\nc{\R}{\mathbb{R}}
\nc{\Z}{\mathbb{Z}}
\nc{\C}{\mathbb{C}}
\nc{\cS}{\mathcal{S}}
\nc{\iso}{\cong}
\dmo{\Mod}{Mod}
\dmo{\Diff}{Diff}
\dmo{\Homeo}{Homeo}
\dmo{\dist}{dist}
\dmo\BDiff{BDiff}
\dmo\SO{SO}
\dmo\slide{sl}
\dmo\im{im}
\dmo\Irr{Irr}
\dmo\Irrpr{Irr^{pr}}
\dmo\Fix{Fix}
\dmo\Out{Out}
\renewcommand{\epsilon}{\varepsilon}
\nc{\coloneq}{\mathrel{\mathop:}\mkern-1.2mu=}
\nc{\margin}[1]{\marginpar{\scriptsize #1}}
\nc{\para}[1]{\bigskip\noindent\textbf{#1}}
\begin{document}

\maketitle

\begin{abstract}
Let $Y\to X$ be a finite normal cover of a wedge of $n\geq 3$ circles.  We prove that for any $v\neq 0\in H_1(Y;\Q)$ there exists a lift $\widetilde{F}$ to $Y$ of a homotopy equivalence $F:X\to X$ so that the set of iterates $\{\widetilde{F}^d(v): d\in \Z\}\subseteq
H_1(Y;\Q)$ is infinite.  The main 
achievement of this paper is the use of representation theory to prove the existence
  of a purely topological object that seems to be inaccessible via
  topology.
\end{abstract}

\section{Introduction}
The group $\Out(F_n)$ of outer automorphisms of the free group $F_n$
is isomorphic to the group of homotopy classes of homotopy
equivalences of a wedge $X$ of $n$ circles.  For any finite regular
cover $f:Y\to X$ with covering group $G$, consider the finite-index
subgroup $\Gamma_G<\Out(F_n)$ of those homotopy equivalences which lift to $Y$
and act trivially on $G$.  There is a representation $\Gamma_G\to {\rm
  GL}(H_1(Y;\Q))$.  Analogous representations of mapping class
groups are known as {\em higher Prym representations} (see
\cite{Lo,PW}).

As one varies $G$, this gives a rich and useful family of
representations of $\Gamma_G$.  These have been studied by
Grunewald-Lubotzky and others; see \cite{GL} and the references
contained therein.  A basic {\em topological} problem that arises from
this situation is: given an arbitrary finite normal cover $Y\to X$
with covering group $G$, must it be true that the $\Gamma_G$-orbit of
any vector $v$ in $H_1(Y;\Q)$ is infinite?  The corresponding problem
with $X$ replaced by a surface and $\Out(F_n)$ replaced by the mapping
class group was posed by Putman-Wieland \cite{PW}.  They proved that
this problem is essentially equivalent to the vanishing of the virtual first Betti
number of the mapping class group, a major open problem posed by
Ivanov; see \cite{PW} for a detailed discussion.

For very special $G$-covers $Y\to X$, the representation of
$\Gamma_G<\Out(F_n)$ on $H_1(Y;\Q)$ can be worked out explicitly,
giving a positive answer to this question; see \cite{GL,GLLM}, and
\cite{Lo,Mc} for related work on finite abelian covers of surfaces.
For arbitrary $G$ it is not feasible to identify the actual
representation of $\Gamma_G$ explicitly.  Instead, one must construct
individual automorphisms of $F_n$ to move a given vector in
$H_1(Y;\Q)$.

One of the aspects that makes the Putman-Wieland problem challenging
is that the elements of $\Gamma_G$ act on $Y$ in a highly constrained
way: each must intertwine the $G$-action. However, for general covers
it is far from clear (and indeed probably false in general) that every automorphism of $H_1(Y;\Q)$ which
{\em homologically} intertwines correctly with $G$ actually arises as a
lift!  This makes it difficult to understand and control
$\Gamma_G$-orbits of vectors. 

In this paper we solve the
Putnam-Wieland problem for graphs.

\begin{theorem}[{\bf Moving homology classes in covers}]
\label{theorem:main-moving}
Let $f:Y\to X$ be a finite $G$-cover with ${\rm rank}(\pi_1(X))\geq
3$.  Then the $\Gamma_G$-orbit of any $0\neq v\in H_1(Y;\Q)$ is
infinite.  In fact, given any $0\neq v\in H_1(Y;\Q)$, there exists a
homotopy equivalence $F:X\to X$ such that $F$ lifts to
$\widetilde{F}:Y\to Y$ and so that the set $\{\widetilde{F}^d(v): d\in
\Z\}\subseteq H_1(Y;\Q)$ is infinite.
\end{theorem}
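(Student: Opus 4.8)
The plan is to realize $F$ by a single, very simple automorphism of $F_n=\langle x_1,\dots,x_n\rangle$ and to use the representation theory of $G$ only to check that this automorphism can be chosen to move $v$. First set up an algebraic model. Let $\rho\colon F_n\to G$ have kernel $N$ and put $g_i=\rho(x_i)$. The cellular chains of $Y$ give an exact sequence of left $\mathbb{Q}[G]$-modules
\[
0\longrightarrow H_1(Y;\mathbb{Q})\longrightarrow \mathbb{Q}[G]^{\,n}\xrightarrow{\ \partial\ }\mathbb{Q}[G]\longrightarrow\mathbb{Q}\longrightarrow 0,\qquad \partial(a_1,\dots,a_n)=\textstyle\sum_i a_i(g_i-1);
\]
write $a_j\colon H_1(Y;\mathbb{Q})\to\mathbb{Q}[G]$ for the $j$-th coordinate, a map of left $\mathbb{Q}[G]$-modules with $\bigcap_j\ker a_j=0$. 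For each $j$, let $X_{\widehat{j}}\subset X$ be the sub-wedge of the circles $x_i$ with $i\neq j$, and $Y^{(j)}\to X_{\widehat{j}}$ the based component of $f^{-1}(X_{\widehat{j}})$: it is the cover of a wedge of $n-1$ circles with deck group $H_{\widehat{j}}=\langle g_i:i\neq j\rangle$, and $H_1(Y^{(j)};\mathbb{Q})\cong\mathbb{Q}[H_{\widehat{j}}]^{\,n-2}\oplus\mathbb{Q}$ as an $H_{\widehat{j}}$-module.

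\textbf{The homotopy equivalences.} For $w\in N\cap\langle x_i:i\neq j\rangle$, the map $\phi^{\,j}_w$ with $x_j\mapsto x_jw$ and $x_i\mapsto x_i$ ($i\neq j$) is a genuine automorphism of $F_n$ (a change of free basis) that induces the identity on $G$, so $[\phi^{\,j}_w]\in\Gamma_G$. Reading its lift off on chains, the induced automorphism of $H_1(Y;\mathbb{Q})$ is the transvection $T_{j,u}\colon z\mapsto z+a_j(z)\,u$, where $u=g_j\langle w\rangle$ and $\langle w\rangle$ ranges over all of $H_1(Y^{(j)};\mathbb{Z})$ as $w$ varies. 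A one-line induction gives
\[
T_{j,u}^{\,d}(z)=z+a_j(z)\Bigl(\textstyle\sum_{e=0}^{d-1}(1+a_j(u))^e\Bigr)u ,
\]
so $d\mapsto T_{j,u}^{\,d}(v)-v$ is a polynomial valued in $H_1(Y;\mathbb{Q})$ which vanishes at $d=0$ and equals $a_j(v)\,u$ at $d=1$. Hence if $a_j(v)\,u\neq0$ the orbit $\{T_{j,u}^{\,d}(v):d\in\mathbb{Z}\}$ is infinite and $F:=\phi^{\,j}_w$ works. Everything therefore reduces to the claim: \emph{for every $0\neq v\in H_1(Y;\mathbb{Q})$ there are $j$ and $w\in N\cap\langle x_i:i\neq j\rangle$ with $a_j(v)\,g_j\langle w\rangle\neq0$.}

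\textbf{Where representation theory and $n\geq 3$ enter.} Decompose $v$ along the $G$-isotypic decomposition of $H_1(Y;\mathbb{Q})$ and fix a constituent $W$ with $\pi_W(v)\neq0$; it suffices to have $a_j(\pi_Wv)\cdot g_j\pi_W\!\bigl(H_1(Y^{(j)};\mathbb{Q})\bigr)\neq0$ for some $j$. Since $n-2\geq1$, $H_1(Y^{(j)};\mathbb{Q})$ contains a copy of the regular representation of $H_{\widehat{j}}$; inducing up to $\mathbb{Q}[G]\otimes_{\mathbb{Q}[H_{\widehat{j}}]}H_1(Y^{(j)};\mathbb{Q})=H_1\!\bigl(f^{-1}X_{\widehat{j}};\mathbb{Q}\bigr)$ and using that the central idempotent $e_W$ commutes with $G$ shows that $\pi_W\!\bigl(H_1(Y^{(j)};\mathbb{Q})\bigr)\neq0$ for \emph{every} constituent $W$ — this is exactly the point that breaks when $n=2$. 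The step I expect to be the real obstacle is to upgrade this, using $\bigcap_j\ker a_j=0$, to a choice of $j$ for which the operator through which $a_j(\pi_Wv)$ acts on the $W$-factor does not annihilate the whole subspace $g_j\pi_W(H_1(Y^{(j)};\mathbb{Q}))$; controlling this forces one to understand the restrictions $W|_{H_{\widehat{j}}}$ (Clifford theory) and the mutual position of these subspaces, and it is precisely this non-topological input that the abstract is advertising. (Alternatively one can try to bypass the explicit automorphisms, showing directly — \`a la Grunewald--Lubotzky — that the image of $\Gamma_G$ in $\mathrm{GL}(H_1(Y;\mathbb{Q}))$ is Zariski-dense in a group with no nonzero fixed vector on each nontrivial constituent, which again produces a single element with infinite orbit on $v$.) Granting the claim, the polynomial computation above finishes the proof; the trivial constituent, on which $\Gamma_G$ acts through a finite-index subgroup of $\mathrm{GL}_n(\mathbb{Z})\curvearrowright H_1(X;\mathbb{Q})$, is handled in the same way.
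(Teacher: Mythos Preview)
Your setup is essentially the paper's: the same edge-slide automorphisms $\phi^j_w$ and the same transvection formula $z\mapsto z+a_j(z)\,u$ on $H_1(Y;\Q)$. Note, incidentally, that since $w$ avoids $x_j$ you in fact have $a_j(u)=0$, so your iterate is simply $T^d_{j,u}(z)=z+d\,a_j(z)\,u$; the formula with $(1+a_j(u))^e$ is correct but unnecessary, and without $a_j(u)=0$ your ``polynomial in $d$'' claim would not be justified.

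The genuine gap is exactly where you flag it: you do not prove that for every $v\neq0$ there exist $j$ and $u\in g_jH_1(Y^{(j)};\Q)$ with $a_j(v)\,u\neq0$. Your proposed route through isotypic components and Clifford theory for $W|_{H_{\widehat j}}$ is both incomplete and far harder than necessary. The paper bypasses this entirely with one idea: rather than adapt $u$ to $v$, fix any $j$ with $a_j(v)\neq0$ (possible since $\bigcap_j\ker a_j=0$) and then choose $u\in H_1(Y^{(j)};\Q)$ whose $G$--orbit $\{g\cdot u:g\in G\}$ is \emph{linearly independent} in $H_1(Y;\Q)$. Writing $a_j(v)=\sum_g c_g g$ with not all $c_g$ zero, one gets $a_j(v)\,u=\sum_g c_g(g\cdot u)\neq0$ automatically --- the same $u$ works for every $v$ with $a_j(v)\neq0$, and the isotypic type of $v$ is irrelevant. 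The existence of such a $u$ is where $n\geq3$ and Chevalley--Weil enter: since $H_1(Y^{(j)};\Q)\cong\Q[H_{\widehat j}]^{\,n-2}\oplus\Q$, take $u$ to be a standard basis element of a single $\Q[H_{\widehat j}]$ summand, so that $\{h\cdot u:h\in H_{\widehat j}\}$ is independent in $H_1(Y^{(j)};\Q)$; then an elementary argument (the $G/H_{\widehat j}$--translates of $Y^{(j)}$ are the disjoint components of $f^{-1}(X_{\widehat j})$, and their first homologies inject independently into $H_1(Y;\Q)$) upgrades this to linear independence of the full $G$--orbit. No isotypic decomposition, no Clifford theory, no analysis of where $a_j(\pi_Wv)$ lands.
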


The statement of Theorem~\ref{theorem:main-moving} is false if ${\rm
  rank}(\pi_1(X))=2$:  we prove in \S~\ref{sec:ell-complicated-word} that the commutator of a 
  pair of generators of $\pi_1(X)$ is fixed under all of $\Out(F_2)$, and we use this to constuct a homology class with finite orbit.  In a similar vein, we want
to mention that Carlos Matheus \cite{M} has observed that there is 
an example of a cover of a genus $2$ surface
in which the Putman--Wieland problem is false. 

\paragraph{The key idea of the proof of Theorem~\ref{theorem:main-moving}.}  
To prove Theorem~\ref{theorem:main-moving} we must, given an arbitrary
$v\neq 0\in H_1(Y;\Q)$, construct out of nowhere a homotopy
equivalence $F:X\to X$ with the desired properties.  We first reduce
this problem to the problem of finding a loop $\ell$ in $X$ satisfying
some very special properties; we then show that an ``edge-slide''
along $\ell$ is the required $F$.  Even in simple examples the loop
$\ell$ that actually works is extremely complicated (see \S~\ref{sec:ell-complicated-top}), 
so how can we
find it in general? The key is to use the structure of
$H_1(Y;\Q)$ as a $G$-representation. This representation was computed in 1934 by  
Chevalley-Weil \cite{CW} for surfaces, and later by 
Gasch\"{u}tz for graphs (see \cite{GLLM}). 

\begin{theorem}[{\bf Chevalley-Weil for graphs} \cite{CW, GLLM}]
\label{thm:cw}
Let $X$ be a finite graph with $\pi_1(X)$ free of rank $n\geq 2$.  Let $Y\to X$ be a finite cover with deck group $G$.  Then as $G$-representations: 
\begin{equation}
\label{eq:cw:graphs}
H_1(Y;\Q) \iso \Q[G]^{n-1}\oplus \Q
\end{equation}
where $\Q[G]$ denotes the regular representation and $\Q$ the trivial representation of $G$.  
\end{theorem}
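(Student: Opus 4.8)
The plan is to read off both sides of \eqref{eq:cw:graphs} directly from the cellular chain complex of $Y$, using that $\Q[G]$ is semisimple so that a finite-dimensional $\Q[G]$-module is determined up to isomorphism by its class in the representation ring $R(G)$ of $G$ (the Grothendieck group of finite-dimensional $\Q[G]$-modules). First I would record the chain-level picture. Let $V$ and $E$ denote the numbers of vertices and edges of $X$, so that $V-E=\chi(X)=1-n$. Fix an orientation on each edge of $X$ and lift these orientations to the edges of $Y$. Since $f\colon Y\to X$ is a normal cover, deck transformations act freely on the point set of $Y$ and commute with $f$; hence $G$ acts freely on the vertices of $Y$ and on its positively oriented edges (no deck transformation fixes a cell, and none flips a lifted oriented edge), with exactly $V$ and $E$ orbits respectively, each isomorphic to $G$ as a $G$-set. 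Therefore, as $\Q[G]$-modules,
\[
 C_0(Y;\Q)\;\cong\;\Q[G]^{V},\qquad C_1(Y;\Q)\;\cong\;\Q[G]^{E},
\]
the boundary map $\partial\colon C_1(Y;\Q)\to C_0(Y;\Q)$ is $G$-equivariant, and $H_1(Y;\Q)=\ker\partial$, $H_0(Y;\Q)=\mathrm{coker}\,\partial$. Since $Y$ is connected (a normal cover of the connected graph $X$) and $G$ fixes its unique connected component, $H_0(Y;\Q)\cong\Q$ with trivial $G$-action.

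Next I would run the Euler-characteristic argument in $R(G)$. The exact sequence
\[
 0\longrightarrow H_1(Y;\Q)\longrightarrow C_1(Y;\Q)\xrightarrow{\ \partial\ }C_0(Y;\Q)\longrightarrow H_0(Y;\Q)\longrightarrow 0
\]
gives, in $R(G)$,
\[
 [H_1(Y;\Q)]\;=\;[C_1(Y;\Q)]-[C_0(Y;\Q)]+[H_0(Y;\Q)]\;=\;(E-V)\,[\Q[G]]+[\Q]\;=\;(n-1)\,[\Q[G]]+[\Q],
\]
using $E-V=n-1$ from the above computation of $\chi(X)$.

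Finally, Maschke's theorem says $\Q[G]$ is semisimple, so $R(G)$ is free abelian on the classes of the irreducible $\Q[G]$-modules, and a semisimple $\Q[G]$-module is determined up to isomorphism by its class. As $H_1(Y;\Q)$ is a submodule of the semisimple module $C_1(Y;\Q)\cong\Q[G]^E$, it is itself semisimple, so the identity of classes above upgrades to the desired isomorphism $H_1(Y;\Q)\cong\Q[G]^{n-1}\oplus\Q$. I do not anticipate a real obstacle here: the only steps needing care are the bookkeeping that makes $C_0(Y;\Q)$ and $C_1(Y;\Q)$ genuine permutation modules on \emph{free} $G$-orbits, and the appeal to semisimplicity that converts the numerical identity in $R(G)$ into an honest isomorphism of representations. (If one wishes to sidestep $R(G)$ altogether, one can instead split the two short exact sequences $0\to H_1(Y;\Q)\to C_1(Y;\Q)\to B\to 0$ and $0\to B\to C_0(Y;\Q)\to\Q\to 0$ using semisimplicity, write $\Q[G]\cong\Q\oplus I$ for the augmentation ideal $I$, and cancel; this reproves \eqref{eq:cw:graphs} with the same input and no representation-ring language.)
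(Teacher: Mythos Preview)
The paper does not give its own proof of this theorem: it is quoted from the literature (Chevalley--Weil, Gasch\"utz; see the citations \cite{CW,GLLM}) and used as a black box in Step~4. So there is no proof in the paper to compare against.

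That said, your argument is correct and is essentially the standard proof of this fact. The only point worth a word of caution is the assertion that $C_1(Y;\Q)\cong\Q[G]^E$ as a $G$-module, i.e.\ that no nontrivial deck transformation sends an oriented edge to itself with reversed orientation. You address this, and indeed it follows from freeness of the deck-group action on $Y$ as a space: such an orientation-reversal would fix the midpoint of the edge. With that checked, the Euler-characteristic computation in $R(G)$ together with Maschke's theorem over $\Q$ gives the isomorphism exactly as you wrote. Your alternative phrasing via splitting the two short exact sequences and cancelling the augmentation ideal is also fine and is sometimes how the Gasch\"utz argument is presented.
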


Using this information about the $G$--representation $H_1(Y;\Q)$ we
construct an $\ell$ so that the corresponding edge-slide map has the 
desired properties.  The novelty here is that we have
constructed a topological object non-explicitly, via representation theory, that
seems inaccessible via purely topological methods. For example, in most covers the loop $\ell$ cannot
be represented by a simple closed curve in \emph{any} identification of
the free group $F_n$ with the fundamental group of a punctured surface (see
\S~\ref{sec:ell-complicated-top}). 

\section{Proof of the main theorem}
\label{sec:moving}

In this section we prove Theorem~\ref{theorem:main-moving} in a sequence of steps. 
We begin by setting up some notational conventions that are used throughout the article.

Fix $n\geq 3$ and let $X$ be the wedge of $n$ copies of $S^1$. Let $p$ be the
point at which the circles are joined. We pick once and for all an
orientation on each copy of $S^1$, and we label these oriented edges
of $X$ by $a_1,\ldots ,a_n$. We identify $a_i$ with the corresponding
generator of $\pi_1(X,p) = F_n$.  Let $f:Y\to X$ be any finite normal cover of $X$ and let 
$G=\pi_1(X)/\pi_1(Y)$ be the corresponding deck group. We denote the
quotient homomorphism by $q:F_n\to G$. 

The vertices of the graph $Y$ are exactly the elements in the preimage
$f^{-1}(p)$ of the (unique) vertex of $X$. Each edges of $Y$ maps by
$f$ to a petal $a_i$ of $X$ injectively except at the endpoints.
Choose a preferred basepoint $\hat{p}\in f^{-1}(p)$, which we
denote as the \emph{identity vertex}. We label each other vertex
$\hat{p}' \in f^{-1}(p)$ by the element $g\in G$ such that
$g\hat{p} = \hat{p}'$.  Each vertex $\hat{p}'$ of $Y$ is of valence $2n$ since a small
neighborhood of the basepoint $p$ in $X$ lifts homeomorphically to
$Y$. We label each half-edge adjacent to $\hat{p}'$ with $a_i$
or $a_i^{-1}$, depending on which half-edge of $X$ it lifts
(interpreted as maps $[0,1]\to X$). Observing that the action of the deck group is
the action obtained by path-lifting yields the following
\begin{observation}\label{observation:cayley-graph}
  $Y$ is the Cayley graph of the group $G$
  with respect to the generating set $q(a_1),\ldots,q(a_n)$.
\end{observation}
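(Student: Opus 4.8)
The plan is to verify directly, using uniqueness of path lifting together with the equivariance of the deck action, that under the chosen vertex labeling the edge of $Y$ emanating from the vertex $g$ along the half-edge labeled $a_i$ terminates at the vertex $g\,q(a_i)$; this is precisely the assertion that $Y$ is the Cayley graph of $G$ with respect to $q(a_1),\ldots,q(a_n)$. The argument is essentially bookkeeping, and the only real subtlety is keeping the conventions straight.

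First I would record the facts I will use. Every deck transformation is a cellular automorphism of $Y$ commuting with $f$; hence it permutes the lifts of each petal $a_i$ and carries half-edges of $Y$ labeled $a_i$ to half-edges labeled $a_i$. Moreover, since the cover is normal, the deck action on the fiber $f^{-1}(p)$ is simply transitive, which is exactly what makes the vertex labeling $\hat p'\mapsto g$ (the unique $g\in G$ with $g\hat p=\hat p'$) well defined, with $G$ acting on the left.

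Next I would reduce everything to the identity vertex. For each $i$, let $e_i$ be the unique lift of the loop $a_i\colon[0,1]\to X$ starting at $\hat p$; then $e_i$ is an edge of $Y$ whose initial (outward) half-edge at $\hat p$ is labeled $a_i$, and since its terminal vertex lies in $f^{-1}(p)$ we may define $g_i\in G$ by requiring that terminal vertex to be $g_i\hat p$. Given any $h\in G$, the path $h\circ e_i$ is again a lift of the loop $a_i$, now starting at $h\hat p$, so by uniqueness of lifting it is the edge of $Y$ at the vertex labeled $h$ whose initial half-edge is labeled $a_i$ (there is exactly one such, since a small neighborhood of $p$ lifts homeomorphically); its terminal vertex is $h(g_i\hat p)=(hg_i)\hat p$, i.e. the vertex labeled $hg_i$. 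Thus $Y$ is the Cayley graph of $G$ with respect to $\{g_1,\ldots,g_n\}$, and it remains only to identify $g_i$.

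Finally, $g_i=q(a_i)$: this is the standard compatibility between the isomorphism $G=\pi_1(X,p)/\pi_1(Y)\cong\mathrm{Deck}(Y/X)$ and monodromy, to the effect that $q([\alpha])$ is the deck transformation carrying $\hat p$ to the endpoint of the lift of $\alpha$ based at $\hat p$; applied to $\alpha=a_i$ this gives $q(a_i)\hat p=g_i\hat p$, hence $g_i=q(a_i)$ by freeness of the deck action. The one place to be careful — and the "main obstacle," such as it is — is matching the left/right and inversion conventions in this correspondence; but any such ambiguity is immaterial here, since the Cayley graph of $G$ on a set $S$ and on $S^{-1}$ coincide as graphs.
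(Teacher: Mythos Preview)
Your argument is correct and is essentially the same as the paper's: the paper states this as an observation with the one-line justification that ``the action of the deck group is the action obtained by path-lifting,'' and you have simply unpacked that sentence carefully. Your identification $g_i=q(a_i)$ via the deck-group/monodromy correspondence is exactly the content of that remark.
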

Let us emphasize that the generating set $q(a_1),\ldots,q(a_n)$ is
not symmetric (and may contain identity elements) and thus the graph
$Y$ only has edges corresponding to right multiplication of the
$q(a_i)$ (and may have loops). 

\bigskip
We are now ready to begin the proof in earnest.

\para{Step 1 (A cohomology class that pairs with $v$): } Fix any edge $e\in Y$.  Let $\xi_e$ be the cocycle defined as follows. Given an arbitrary $1$-cycle $\sigma$, write it as a weighted
sum of edges $\sigma=ce+\sum_{e'\neq e}c_{e'}e'\in C_1(Y)$ and define $\xi_e(\sigma):=c$. 
The set $\{\xi_e: \text{$e$ is an edge of $Y$}\}$ spans $H^1(Y;\Q)$.   Since each edge of $Y$ is the lift of some $a_i$, it follows that we can relabel the $a_i$ so that there exists a lift $\alpha$ of $a_1$ to $Y$  so that $\xi_\alpha(v)\neq
0$. 

\para{Step 2 (Edge sliding maps and the action of their lifts): } Let $\ell\subset X$ be any loop that
intersects $a_1$ only in the basepoint $x_0\in X$.  We define a
homotopy equivalence $\slide_\ell:X\to X$ by taking the initial point
of $a_1$ and dragging it along $\ell$.  On the level of $\pi_1(X)$, we
have that $(\slide_\ell)_\ast$ maps $a_1$ to $\ell \ast a_1$ and fixes all
other generators $a_2,\ldots ,a_n$ of $\pi_1(X)$. Since $X$ is an
Eilenberg-MacLane space, the automorphism $\slide_\ast$ determines the homotopy equivalence $\slide_\ell$ up to homotopy.

\begin{lemma}[{\bf Action of a lifted edge-slide on \boldmath$H_1$}]
Suppose $\ell$ is a based loop in $X$.  If $\ell$ lifts to a closed
loop $\tilde{\ell}$ based at the identity vertex in $Y$ then
$\slide_\ell$ lifts to a homotopy equivalence $F:Y\to Y$ such that
$F_\ast:H_1(Y;\Q)\to H_1(Y;\Q)$ acts as   
\begin{equation}
\label{equation:action1}
 F_\ast(w)= w+\sum_{g\in G}\xi_{g\cdot \alpha}(w) [g\cdot \tilde{\ell}] 
 \end{equation}
 where $[g\cdot \tilde{\ell}]$ denotes the class of $g\cdot \tilde{\ell}$  in $H_1(Y;\Q)$.
\end{lemma}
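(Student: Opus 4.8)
The idea is to compute directly how the lift $F$ acts on chains, using the combinatorial description of $Y$ as the Cayley graph of $G$ from Observation~\ref{observation:cayley-graph}. First I would check that $\slide_\ell$ does lift to $Y$: since $\ell$ lifts to a closed loop $\tilde\ell$ at the identity vertex, $\ell$ lies in the image of $\pi_1(Y)\to\pi_1(X)$, hence the automorphism $(\slide_\ell)_\ast$ preserves $\pi_1(Y)$ (it sends $a_1\mapsto \ell\ast a_1$ and fixes the other $a_i$, so it multiplies each element of $\pi_1(X)$ on cosets in a way compatible with $q$, as $q(\ell)=1$), and therefore $\slide_\ell$ lifts to a homotopy equivalence $F:Y\to Y$ fixing the identity vertex. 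One should also record that because $q(\ell)=1$, the lift $F$ is $G$-equivariant (it commutes with the deck action), which is what forces the sum over $g\in G$ in \eqref{equation:action1}.

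**Main computation.** Next I would identify the edges of $Y$ lying over $a_1$: by construction these are exactly the translates $g\cdot\alpha$, $g\in G$, where $\alpha$ is the chosen lift of $a_1$; the edge $g\cdot\alpha$ runs from the vertex $g$ to the vertex $g\,q(a_1)$. I claim $F$ can be chosen to fix every edge of $Y$ lying over $a_2,\dots,a_n$ and to send the edge $g\cdot\alpha$ to the path $(g\cdot\tilde\ell)\ast(g\cdot\alpha)$ — this is just the $G$-equivariant lift of the description of $(\slide_\ell)_\ast$ on $X$, and it is a legitimate choice of map because $g\cdot\tilde\ell$ is a loop at the vertex $g$, which is the initial vertex of $g\cdot\alpha$. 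Now take any $1$-cycle $w$ and write $w=\sum_{e}c_e\,e$ in $C_1(Y)$. The only edges moved are the $g\cdot\alpha$, and $\xi_{g\cdot\alpha}(w)=c_{g\cdot\alpha}$ by definition of the cocycle in Step~1. Applying $F_\#$ on chains:
\[
F_\#(w)=\sum_{e\text{ over }a_2,\dots,a_n}c_e\,e \;+\;\sum_{g\in G}c_{g\cdot\alpha}\bigl((g\cdot\tilde\ell)+(g\cdot\alpha)\bigr)=w+\sum_{g\in G}\xi_{g\cdot\alpha}(w)\,(g\cdot\tilde\ell),
\]
since the "$w$" term just collects all the original edges back. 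Passing to homology classes gives exactly \eqref{equation:action1}. I would also remark that $F_\#$ genuinely takes cycles to cycles here because $g\cdot\tilde\ell$ is a cycle (being a loop) — so the right-hand side is visibly a cycle, consistent with $F_\ast$ being defined.

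**Where the care is needed.** The computation itself is routine; the one place to be careful is the bookkeeping of which lift $F$ we take and why the formula is independent of that choice on homology. Any two lifts of $\slide_\ell$ fixing the identity vertex differ by composition with a deck transformation, but we want the specific $G$-equivariant lift; I would note that the $G$-equivariant lift exists and is unique once we demand $F(\hat p)=\hat p$ and $G$-equivariance, because $(\slide_\ell)_\ast$ commutes with the $G$-action on cosets. A secondary subtlety is orientation conventions: one must make sure $g\cdot\alpha$ is oriented as the lift of $a_1$ starting at $g$ (matching the half-edge labelling set up before the observation), so that dragging the \emph{initial} point of $a_1$ along $\ell$ lifts to pre-concatenating $g\cdot\tilde\ell$ rather than post-concatenating or inserting an inverse. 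With those conventions fixed, formula \eqref{equation:action1} follows immediately from the chain-level computation above. I expect no essential obstacle — the lemma is a direct unwinding of definitions — but the clean statement depends on having pinned down these conventions in the setup.
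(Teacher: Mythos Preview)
Your proposal is correct and follows essentially the same approach as the paper: both arguments verify that $q(\ell)=1$ forces $(\slide_\ell)_\ast$ to preserve $\ker(q)$ so that a $G$-equivariant lift $F$ fixing the identity vertex exists, and then compute $F_\#$ edge-by-edge to obtain \eqref{equation:action1}. The only organizational difference is that the paper first takes the lift and then argues (via the subgraph $Y_0=Y\setminus p^{-1}(\mathrm{int}\,a_1)$ and covering-space uniqueness) that it must fix every edge not lying over $a_1$, whereas you define $F$ directly on edges and observe it is a lift; also, your remark that ``any two lifts of $\slide_\ell$ fixing the identity vertex differ by a deck transformation'' is misstated---such a lift is unique---but this does not affect the argument.
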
 

\begin{proof}
  The homotopy equivalence $\slide_\ell$ fixes the basepoint of
  $X$. Recall that we have a surjection $q:F_n \to G$ defining the
  cover $Y$. Since by assumption $\ell$ lifts to a closed loop in
  $Y$, we have $q(\ell) = 0$. Therefore $(\slide_\ell)_*:F_n\to F_n$
  preserves the kernel $\ker(q)$.  As a consequence, $\slide_\ell$ lifts to $Y$. 
  
  Let $F$ denote the lift of
  $\slide_\ell$ that fixes the identity vertex of $Y$. Since
  $q(\ell)=0$ it follows that $(\slide_\ell)_*$ induces the identity
  automorphism $G\to G$. Thus, for every 
  element $g\in G$ in the deck group, 
  \begin{equation}
  F(g\cdot x) = \slide_*(g)\cdot F(x) = g\cdot F(x) \label{equation:deck-action}
  \end{equation}
  In particular, $F$ fixes each vertex of $Y$ because $X$ has a single
  vertex and therefore each vertex of $Y$ is the image of the identity
  vertex under some $g\in G$.

Let $Y_0$ denote the connected component of $Y-{\rm interior}(p^{-1}(a_1))$
containing the identity vertex.  The restriction $f:Y_0\to X-a_1$
  is a covering map, and $\slide_\ell$ restricts to the identity map on 
  $X-a_1$. Thus $F|_{Y_0}$ is a lift of the identity that fixes
  the identity vertex $\widetilde{b}$ in $Y$, and so $F$ restricts
  to the identity on $Y_0$. By 
  Equation~(\ref{equation:deck-action}) $F$ restricts to the identity
  on each component $Y_i$ of $Y-p^{-1}(a_1)$ because each $Y_i$ is the
  $g$-translate of $Y_0$ for some $g\in G$. This implies that each
  edge of $Y$ labeled by any $a_i\neq a_1$ is also fixed by $F$,  
  since any such edge is contained in some $Y_i$.
  
  \smallskip
  Now consider the (unique) oriented edge $\widetilde{a}_1$ based at
  $\widetilde{b}$ which is a lift of $a_1$. The definition of lifting a map via lifting
  paths shows that $F$ maps $\widetilde{a}_1$
  to the edge-path $\widetilde{\ell}\ast \widetilde{a}_1$.  Equation~(\ref{equation:deck-action}) 
  implies that $F$ maps $g\cdot \widetilde{a}_1$
  to the edge-path $(g\cdot \widetilde{\ell})\ast (g\cdot
  \widetilde{a}_1)$.

  Since $F$ fixes each vertex in $Y$ and maps each edge to an
  edge-path, $F$ induces a chain map $F_\#$ on the chain complex
  $C_*(Y)$.  The discussion above implies that, for each oriented edge $\gamma$ of $Y$,
  \[ F_\#(\gamma) = 
  \left\{ 
  \begin{array}{ll}
  \gamma & \text{\ \ if $\gamma$ is
    labeled by $a_j, j>1$}\\
    \gamma + g\cdot \widetilde{\ell}&\text{\ \ if $\gamma = g\cdot \widetilde{a}_1$}
   \end{array}
   \right.\]
  Thus for each oriented edge $\gamma$ : 
  \begin{equation}
  F_\#(\gamma) = \gamma + \sum_{g\in G}\xi_{g\cdot \alpha}(\gamma)
  [g\cdot \tilde{\ell}]\label{equation:chain-level} 
  \end{equation}
  because $\xi_{g\cdot \alpha}(\gamma) = 1$  if $\gamma =
  g\cdot \alpha$ and $\xi_{g\cdot \alpha}(\gamma) =0$ otherwise.

  Because the oriented edges of $Y$ form a basis of $C_1(Y)$ and
  $F_\#$ is a chain map, Equation~(\ref{equation:chain-level}) holds
  for all $\gamma\in C_1(Y)$.  Taking the induced map $F_*$ on $H_1(Y;\Q)$ gives 
  Equation~(\ref{equation:action1}), as desired.
\end{proof}

\para{Step 3 (The right choice of $\ell$ implies the theorem): } Suppose that we can find a based loop $\ell\subset X$ satisfying each of the following:
\begin{enumerate}
\item $\ell$ does not intersect the interior of the edge $a_1$.
\item $\ell$ lifts to a closed loop $\tilde{\ell}$ based at the identity vertex in $Y$.
\item  The set
$\{G\cdot [\tilde{\ell}]\}\subset H_1(Y)$ is linearly
independent. 
\end{enumerate}

Under these conditions, $\slide_\ell$ lifts to a homotopy equivalence $F:Y\to Y$, and we will prove below that 
\begin{equation}
  \label{equation:action2}  
  F^n(w) = w+\sum_{g\in G}n\xi_{g\cdot \alpha}(w) [g\cdot \tilde{\ell}] 
\end{equation}
for all $n>0$ 

Since $\xi_{\alpha}(v)\neq 0$, Equation~\eqref{equation:action2} with $w=v$ implies that the coefficient of $[\ell]$ in $F^n(v)$ is unbounded as $n$ increases.  The assumption that 
$\{G\cdot [\tilde{\ell}]\}\subset H_1(Y)$ is linearly
independent gives that all of the $F^n(v)$ are distinct, thus proving the theorem.

\smallskip
To prove that Equation~\eqref{equation:action2} holds, first 
note that $\xi_{g\cdot \alpha}(g'\cdot\widetilde{\ell}) = 0$ for all
  $g,g'\in G$  since no lift of $\ell$ intersects any lift of the edge
  $a_1$. Thus $\xi_{g\cdot \alpha}(F(w)) = \xi_{g\cdot \alpha}(w)$ by
  Equation~(\ref{equation:action1}).

 Equation~(\ref{equation:action1}) and induction on $n$ now give: 
  \begin{eqnarray*}
   F^{n+1}(w) = F(F^n(w)) &=& F^n(w)+\sum_{g\in G}\xi_{g\cdot \alpha}(F^n(w))
   [g\cdot \tilde{\ell}]  \\
   &=& F^n(w)+\sum_{g\in G}\xi_{g\cdot \alpha}(w)[g\cdot \tilde{\ell}]
   \\
   &=& w+\sum_{g\in G}n\xi_{g\cdot \alpha}(w) [g\cdot \tilde{\ell}]  +\sum_{g\in G}\xi_{g\cdot \alpha}(w)[g\cdot \tilde{\ell}]  \\
   &=& w+\sum_{g\in G}(n+1)\xi_{g\cdot \alpha}(w) [g\cdot \tilde{\ell}]
  \end{eqnarray*}
and so Equation~\eqref{equation:action2} follows by induction.

\para{Step 4 (Finding the $\ell$): } By Step 3, to prove the theorem it is enough to find a based loop $\ell\subset X$ satisfying the three assumptions 1-3 stated in Step 3.  We do this via 
representation theory.

Recall that we have a surjection $q:F_n\to G$.  Let \[G_0:=q(\langle a_2,\ldots ,a_n\rangle)\]

Let $Y_0$ denote the connected component of $Y-{\rm interior}(p^{-1}(a_1))$ containing the identity vertex.  The restriction of the covering map $f:Y\to X$ to $Y_0$ is a regular cover with deck group $G_0$.  Thus, by Theorem~\ref{thm:cw}, we have the following.

\begin{observation}
$H_1(Y_0;\Q)$ is isomorphic as a $G_0$-module to $\Q[G_0]^{n-2}\oplus \Q$.
\end{observation}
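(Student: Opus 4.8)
The plan is to reduce the statement to the Chevalley--Weil theorem (Theorem~\ref{thm:cw}) applied to a cover of a smaller wedge of circles. The paragraph preceding the statement already records the key geometric fact: $f$ restricts to a finite regular cover $Y_0\to X-a_1$ with deck group $G_0$. So the proof consists of checking that the hypotheses of Theorem~\ref{thm:cw} hold for this cover and then reading off its conclusion.

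First I would make the identifications explicit. Deleting the interior of the petal $a_1$ leaves exactly the wedge of the remaining $n-1$ petals based at $p$, so $X-a_1$ is a finite graph with $\pi_1(X-a_1,p)=\langle a_2,\dots,a_n\rangle$ free of rank $n-1$; since $n\geq 3$ this rank is at least $2$, as Theorem~\ref{thm:cw} requires. If more detail is wanted for the assertion that $Y_0\to X-a_1$ is regular with deck group $G_0$, I would argue as follows. Restricting the covering $f$ over the subcomplex $X-a_1$ is again a covering, which is finite since $Y\to X$ is finite. Because $Y\to X$ is the normal cover determined by $q\colon F_n\to G$ and $\hat p$ is the identity vertex, we have $f_*\pi_1(Y,\hat p)=\ker q$; hence the subgroup of $\langle a_2,\dots,a_n\rangle$ classifying the component $Y_0$ of $f^{-1}(X-a_1)$ through $\hat p$ is $\ker q\cap\langle a_2,\dots,a_n\rangle=\ker\!\big(q|_{\langle a_2,\dots,a_n\rangle}\big)$, the kernel of a homomorphism out of $\langle a_2,\dots,a_n\rangle$ and hence normal in it. Therefore the cover $Y_0\to X-a_1$ is regular with deck group $\langle a_2,\dots,a_n\rangle/\ker(q|_{\langle a_2,\dots,a_n\rangle})\iso q(\langle a_2,\dots,a_n\rangle)=G_0$.

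Finally I would apply Theorem~\ref{thm:cw} to the finite $G_0$-cover $Y_0\to X-a_1$, whose base is a finite graph with free fundamental group of rank $n-1\geq 2$, to obtain
\[
H_1(Y_0;\Q)\ \iso\ \Q[G_0]^{(n-1)-1}\oplus\Q\ =\ \Q[G_0]^{n-2}\oplus\Q
\]
as $G_0$-representations, which is exactly the assertion. I expect no serious obstacle here: all the content is the bookkeeping that exhibits $Y_0$ as a (Cayley-type) regular cover of the subwedge spanned by $a_2,\dots,a_n$, and the only points deserving care are that $f_*\pi_1(Y,\hat p)$ equals $\ker q$ on the nose rather than merely a conjugate of it --- which uses that $Y\to X$ is the \emph{normal} cover defined by $q$, based at the identity vertex --- and that deleting an open petal of $X$ neither disconnects $X$ nor alters it away from the wedge of the surviving petals.
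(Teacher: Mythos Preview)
Your proposal is correct and follows exactly the paper's approach: the paper simply states that $f|_{Y_0}:Y_0\to X-a_1$ is a regular cover with deck group $G_0$ and then invokes Theorem~\ref{thm:cw}. You have filled in more justification than the paper gives (verifying normality of the subgroup classifying $Y_0$ and the rank hypothesis $n-1\ge 2$), but the argument is the same.
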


Now note that $Y-{\rm interior}(p^{-1}(a_1))$ is the union of the translates $Y_0,Y_1,\ldots , Y_k$  of the subgraph $Y_0$ under the deck group $G$; here $k=[G:G_0]$.  The reason for this is that $p^{-1}(a_1)$ is $G$-invariant, and hence so is its complement.

\begin{claim}\label{claim:yj-independent}
The map 
\[I:\oplus_{j=0}^kH_1(Y_j) \to H_1(Y)\]
induced by the inclusions $Y_j\to Y$ is injective.
\end{claim}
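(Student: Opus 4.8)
The plan is to observe that $I$ is nothing but the inclusion of one subspace of $H_1(Y)$ into another, once all the groups in sight are viewed inside $C_1(Y)$. The relevant soft fact is that for any graph $Z$ one has $C_2(Z)=0$, so $B_1(Z)=0$ and $H_1(Z)=Z_1(Z)=\ker\bigl(\partial\colon C_1(Z)\to C_0(Z)\bigr)$; in particular $H_1$ of a graph embeds canonically into its group of $1$-chains.

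First I would pin down the combinatorial picture. Write $W\coloneq Y-{\rm interior}(p^{-1}(a_1))=Y_0\sqcup\cdots\sqcup Y_k$. Removing edge-interiors keeps all vertices, so $W$ contains every vertex of $Y$ and $C_0(W)=C_0(Y)$; its edges are exactly the edges of $Y$ not lying over $a_1$, and since the $Y_j$ are the connected components of $W$ their edge sets partition the edge set of $W$. Hence $C_1(W)=\bigoplus_{j=0}^k C_1(Y_j)$ sits inside $C_1(Y)$ as the coordinate subspace spanned by the non-$a_1$ edges, the boundary map is compatible with all of these inclusions, and therefore $Z_1(W)=\bigoplus_j Z_1(Y_j)$ and $Z_1(W)=Z_1(Y)\cap C_1(W)$.

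Combining these identifications, $\bigoplus_{j=0}^k H_1(Y_j)=\bigoplus_j Z_1(Y_j)=Z_1(W)=Z_1(Y)\cap C_1(W)$, which is manifestly a linear subspace of $Z_1(Y)=H_1(Y)$; unwinding the definitions shows that this composite identification is exactly the map $I$ induced by the inclusions $Y_j\hookrightarrow Y$. Thus $I$ is injective. (Equivalently, one may quote the long exact sequence of the pair $(Y,W)$, whose relevant segment is $H_2(Y,W)\to\bigoplus_j H_1(Y_j)\xrightarrow{\,I\,}H_1(Y)$, together with $H_2(Y,W)=0$, which holds because the cellular chain complex of the pair already vanishes in degree $2$.) The argument is purely formal, so there is no real obstacle; the one thing to state with care is that a $1$-cycle of $Y$ supported off the lifts of $a_1$ genuinely breaks up as a sum of $1$-cycles, one lying in each $Y_j$ — which is precisely the statement that the edges of $Y$ split as above.
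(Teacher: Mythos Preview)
Your proof is correct and takes a genuinely different route from the paper's. The paper argues topologically: it collapses edges with distinct endpoints until each $Y_j$ becomes a rose, then collapses further edges outside the $Y_j$ until $Y$ itself is a rose, and observes that sub-roses meeting only at the common vertex contribute independent homology. You instead work directly at the chain level, using the fact that for a graph $H_1=Z_1\subset C_1$, so that $\bigoplus_j H_1(Y_j)=Z_1(W)=Z_1(Y)\cap C_1(W)$ is visibly a subspace of $H_1(Y)$.

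Your argument is more elementary in that it avoids any discussion of why the collapsing maps are homotopy equivalences and why they preserve the injectivity question; everything reduces to the observation that $C_0(W)=C_0(Y)$ (since only edge-interiors were removed), so the boundary map on $C_1(W)$ is literally the restriction of the one on $C_1(Y)$. The paper's approach, on the other hand, gives a clean geometric picture and generalizes readily to situations where the subgraphs might share vertices but not edges. Either way the key point is the same: the $Y_j$ are edge-disjoint and together contain all vertices of $Y$.
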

\begin{proof}
  First note that the graphs $Y_j$ are disjoint, and each vertex of
  $Y$ is contained in some $Y_j$.

  By collapsing edges of $Y$ with distinct endpoints, we can ensure that each
  $Y_j$ is a rose (i.e. has a single vertex). Since such collapses are homotopy
  equivalences, this modification does not change injectivity of the
  map $I$. Similarly, we can collapse further edges in the complement
  of the $Y_j$ to ensure that $Y$ becomes itself a rose, again without
  changing $I$. In the resulting rose, the claim is clear since each
  $Y_i$ is a ``sub-rose'',  and any two of these sub-roses intersect only at the one remaining vertex.
\end{proof}

Now choose any $u\in H_1(Y_0;\Q)$ so that $\{G_0\cdot u\}$ is linearly independent;
this is possible by considering $u \in \Z[G_0] \subset \Q[G_0]^{n-2}\oplus \Q \iso H_1(Y_0;\Q)$
of the form $u=1\cdot g_0$ for any $g_0\in G_0$.

\begin{claim}
  The set $\{g\cdot u : g\in G\}\subset H_1(Y;\Q)$ is linearly independent.
\end{claim}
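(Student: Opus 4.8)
The plan is to derive this from Claim~\ref{claim:yj-independent} together with the defining property of $u$. First recall the combinatorial structure of the translates: the vertex set of $Y_0$ is exactly $\{g_0\cdot\hat p : g_0\in G_0\}$, so the deck translates $g\cdot Y_0$ are in bijection with the left cosets in $G/G_0$, with $g\cdot Y_0 = g'\cdot Y_0$ if and only if $gG_0 = g'G_0$; in particular the $G$-stabilizer of the subgraph $Y_0$ is $G_0$, and so $\{Y_0,Y_1,\dots,Y_k\}$ is permuted by $G$ with the stabilizer of $Y_0$ acting on $H_1(Y_0;\Q)$ through the given $G_0$-action.

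Now partition the family $\{g\cdot u : g\in G\}$ according to the coset $gG_0$. Since $u\in H_1(Y_0;\Q)$ and the deck transformation $g$ carries $Y_0$ homeomorphically onto $g\cdot Y_0$, the vector $g\cdot u$ lies in the subspace $H_1(g\cdot Y_0;\Q)$, which is one of the summands in Claim~\ref{claim:yj-independent} and depends only on the coset $gG_0$. Because $I$ is injective, it therefore suffices to show that, for each coset, the sub-collection of vectors landing in the corresponding summand is linearly independent. Fixing a representative $g$, these vectors are $\{gg_0\cdot u : g_0\in G_0\} = \{g\cdot(g_0\cdot u) : g_0\in G_0\}$; since $g$ induces an isomorphism $H_1(Y_0;\Q)\to H_1(g\cdot Y_0;\Q)$, this collection is linearly independent precisely because $\{g_0\cdot u : g_0\in G_0\}$ was chosen to be linearly independent in $H_1(Y_0;\Q)$. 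Assembling over all cosets shows that $\{g\cdot u : g\in G\}$ is linearly independent in $\bigoplus_{j} H_1(Y_j;\Q)$, hence in $H_1(Y;\Q)$ by the injectivity of $I$.

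I do not expect a genuine obstacle. The only point requiring care is the bookkeeping in the first paragraph identifying the $G$-stabilizer of $Y_0$ with $G_0$, which is what lets us match the orbit $\{g\cdot u : g\in G\}$ with the disjoint union, over cosets $gG_0$, of the translated copies of the independent set $\{G_0\cdot u\}$; abstractly this is the statement that $\bigoplus_j H_1(Y_j;\Q) = \mathrm{Ind}_{G_0}^{G}H_1(Y_0;\Q)$ and that inducing up the rank-one free $\Q[G_0]$-submodule $\Q[G_0]\cdot u$ produces the free $\Q[G]$-submodule $\Q[G]\cdot u$.
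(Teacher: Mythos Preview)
Your argument is correct and follows essentially the same route as the paper: decompose $G$ into left $G_0$-cosets, use Claim~\ref{claim:yj-independent} to separate the relation into one per coset, and then invoke the defining independence of $\{G_0\cdot u\}$ after pulling back by the deck transformation. Your first paragraph makes explicit the identification of the stabilizer of $Y_0$ with $G_0$ (which the paper uses tacitly), and your closing remark recasting the statement as $\mathrm{Ind}_{G_0}^{G}(\Q[G_0]\cdot u)\cong \Q[G]\cdot u$ is a nice conceptual gloss.
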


\begin{proof}
  Suppose that
  \[ \sum_{g \in G} c_g gu = 0 \] 
  We rewrite this as a sum over coset representatives,
  \[ \sum_{h \in G/G_0}\sum_{g_0\in G_0} c_{hg_0} hg_0u = 0 \] 
  By Claim~\ref{claim:yj-independent} we then see that for each $h \in G/G_0$
  we have
  \[ 0 = \sum_{g_0 \in G} c_{hg_0} hg_0u = h\left(\sum_{g_0 \in G} c_{hg_0} g_0u\right) \] 
  and thus
  \[ 0 = \sum_{g_0 \in G} c_{hg_0} g_0u \] 
  By the defining property of $u$ this however implies $c_{hg_0} = 0$ for all $g_0\in G_0$.
  Since this is true for all $h,g_0$ we see that  $c_g = 0$ for all $g\in G$.
\end{proof}

Hence, any loop $\ell \subset Y_0$ defining the element $u \in H_1(Y_0)$ 
has the desired properties 1-3. This finishes the proof of Theorem~\ref{theorem:main-moving}. 

\section{Complexity of $\ell$}
\label{sec:ell-complicated}

Even in the simplest cases, the simplest loop $\ell$ with the
required properties 1-3 is necessarily complicated from a topological
perspective (and the complexity grows with the
degree of the cover). By using representation theory, we were able
to completely bypass this issue, but in this section we collect
two observations which indicate the complexity of the desired element.

\subsection{$\ell$ is complicated as a word}
\label{sec:ell-complicated-word}
In this section we describe explicitly a simple but nontrivial example
of the homology of a cover as a representation. The purpose of this
example is twofold -- on the one hand it will show that even in the
simplest cases, words $\ell$ as required in the proof of Theorem~\ref{theorem:main-moving} 
 are fairly complicated.  As a byproduct, we will show that
Theorem~\ref{theorem:main-moving} is false in rank $2$.

\smallskip
Let $X$ be the wedge of two circles. To simplify notation, denote the
generators of $\pi_1(X)$ by $a,b$.  Let $Y$ be the cover of $X$ corresponding to the kernel of the map $q:\pi_1(X) \to H_1(X;\Z/2\Z)$. This is a $4$-fold cover with 
deck group $\Z/2\Z \times \Z/2\Z$.
\begin{figure}
  \centering
  \includegraphics[width=0.2\textwidth]{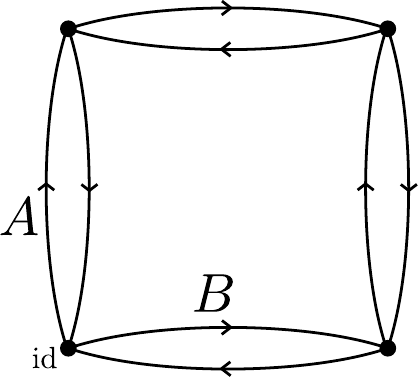}
  \caption{The mod-2 homology cover}
  \label{fig:cover-mod2}
\end{figure}
Theorem~\ref{thm:cw} implies that 
\[H_1(Y;\Q) = \Q^2 \oplus V_a \oplus V_b \oplus V_{ab}\] 
where
$V_a,V_b,V_{ab}$ are the three nontrivial rational 
irreducible representations of $G=\Z/2\Z\times\Z/2\Z$. Each of these representations is $1$-dimensional, and the representations are given as follows:
\[ q(a)|_{V_a}(z) = z, \quad q(b)|_{V_a}(z) = -z \] 
\[ q(a)|_{V_b}(z) = -z, \quad q(b)|_{V_b}(z) = z \] 
\[ q(a)|_{V_{ab}}(z) = -z, \quad q(b)|_{V_{ab}}(z) = -z \] In order to
find these representations topologically, we begin by considering the
lifts $A, B$ of $a^2, b^2$ respectively to the identity
basepoint. We then see the following
\[ V_a = \langle A - q(b)A \rangle \]
\[ V_b = \langle B - q(a)B \rangle \]
which follows simply as the representation is the correct one. To
describe $V_{ab}$, let $C$ be the elevation of $ab$ at the identity
vertex and let $C'$ be the
elevation of $ba$ at the identity vertex.  It is easy to check that 
\[ V_{ab} = \langle C - C' \rangle .\]
Note that each of these three irreducible representations can be generated by linear combinations
of elevations of primitive elements -- but not by elevations of primitive
elements themselves. This is indicative of the situation for general covers. 

The trivial representation is the image of the transfer map:
$$\Q^2 = \langle A + q(b) A, B + q(a) B \rangle$$ 

As a final note, we will describe an element $x \in H_1(Y;\Q)$ so that
$\langle Gx \rangle \simeq \Q[G]$ (such an element is not unique). Such an $x$ 
necessarily needs to contain a contribution from each irreducible representation; here
we can simply add the bases of
$V_a, V_b, V_{ab}$ and something nonzero in the transfer part. 
One example is the lift of the element
\[ a^3b^{-1}ab. \]
Note that, again, this element is not primitive, but $x$ is a linear
combination of elevations of primitive elements. In fact, elements with the
defining property of $x$ usually cannot be equal to elevations of primitive
elements (as will be shown in the next section).

Also note that for more complicated $G$, the complexity of $\ell$ will increase
further, as more irreducible representations appear in $\Q[G]$.

\smallskip
Finally, note that the loop representing the commutator $[a,b]$ lifts in $Y$ to 
a homologically nontrivial loop. The group $\Out(F_2)$ fixes the conjugacy class
defined by  $[a,b]$, and hence any lift of an element in  $\Out(F_2)$ permutes the
set of lifts of $[a,b]$. This shows
\begin{lemma}
  There are covers $Y\to X$ of a rank $2$ graph $X$, and homology classes
  $v\in H_1(Y;\Q)$ so that the set
  \[ \{\widetilde{\phi}_*(v) \,|\, \phi\in\Out(F_n)\mbox{ lifts to }Y\} \]
  is finite.
\end{lemma}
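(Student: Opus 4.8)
The plan is to take precisely the cover constructed just above: let $X$ be the wedge of two circles with $\pi_1(X)=F_2=\langle a,b\rangle$, and let $f\colon Y\to X$ be the cover corresponding to $\ker\bigl(q\colon F_2\to H_1(X;\Z/2\Z)\bigr)$, a regular $4$-fold cover with deck group $G=\Z/2\Z\times\Z/2\Z$. Let $\gamma\subset X$ be the loop $aba^{-1}b^{-1}$ representing the conjugacy class of the commutator $[a,b]$, and let $v\in H_1(Y;\Q)$ be the class of the elevation $\widetilde\gamma$ of $\gamma$ based at the identity vertex of $Y$. The lemma will follow from three facts: (i) since $[a,b]$ lies in the commutator subgroup of $F_2$ and $G$ is abelian, $q([a,b])=0$, so \emph{every} elevation of $\gamma$ is a closed loop, and there are at most $|G|=4$ of them; (ii) $v\neq0$; and (iii) every element of $\Out(F_2)$ that lifts to $Y$ permutes the elevations of $\gamma$ up to orientation.

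For (ii), recall from Observation~\ref{observation:cayley-graph} that $Y$ is the Cayley graph of $G$ on $q(a),q(b)$. Writing $[a,b]=ab\,(ba)^{-1}$, the elevation $\widetilde\gamma$ is the concatenation of the elevation of $ab$ at the identity vertex with the reverse of the elevation of $ba$ at the identity vertex; as a $1$-cycle this is exactly the class $[C]-[C']$ of \S\ref{sec:ell-complicated-word}, a generator of the nontrivial irreducible summand $V_{ab}\subset H_1(Y;\Q)$, hence nonzero. (Equivalently, one checks directly that $\widetilde\gamma$ is an embedded $4$-cycle $e\to q(a)\to q(ab)\to q(b)\to e$ in the graph $Y$, and an embedded cycle in a graph is never null-homologous.)

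For (iii) and the conclusion: fact (iii) rests on the classical statement that every automorphism of $F_2$ sends $[a,b]$ to a conjugate of $[a,b]^{\pm1}$ (for instance via the identification of $\Out(F_2)$ with the mapping class group of a once-punctured torus whose boundary represents $[a,b]$). Thus, if $\phi\in\Out(F_2)$ lifts to $\widetilde\phi\colon Y\to Y$ covering a homotopy equivalence $h\colon X\to X$ representing $\phi$, then $h(\gamma)$ is freely homotopic to $\gamma$ or to $\gamma^{-1}$, so $\widetilde\phi$ carries each elevation of $\gamma$ to a loop freely homotopic to an elevation of $\gamma^{\pm1}$. Letting $S\subset H_1(Y;\Q)$ be the finite set of homology classes of elevations of $\gamma$ (so $v\in S$, $v\neq0$ by (ii)), it follows that $\widetilde\phi_*(S)\subseteq S\cup(-S)$. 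Hence $\{\widetilde\phi_*(v)\mid \phi\in\Out(F_2)\text{ lifts to }Y\}\subseteq S\cup(-S)$ is finite, proving the lemma. The main point requiring care is purely bookkeeping — tracking the finitely many elevations of $\gamma$ up to change of basepoint and orientation, and noting that this set is $G$-invariant so that the choice of lift $\widetilde\phi$ is immaterial; the representation-theoretic input is only the already-established fact that $V_{ab}$ is a nonzero summand of $H_1(Y;\Q)$.
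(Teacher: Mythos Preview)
Your argument is correct and follows exactly the approach sketched in the paper just before the lemma: take the mod-$2$ homology cover, let $v$ be the elevation of $[a,b]$, and use that $\Out(F_2)$ preserves the conjugacy class of $[a,b]^{\pm1}$ so that lifts permute the finitely many elevations up to sign. You have simply supplied more detail than the paper does---in particular the careful bookkeeping with $S\cup(-S)$ to handle orientation reversal, and the explicit identification of $\widetilde\gamma$ with $C-C'\in V_{ab}$ to verify $v\neq 0$.
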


\subsection{$\ell$ is complicated as a topological object}
\label{sec:ell-complicated-top}

Here, we note the following simple observation
\begin{lemma}
  Let $Y \to X$ be a regular cover of graphs with the property that no primitive
  element in $\pi_1(X)$ lifts (with degree $1$) to $Y$. Then any element $\ell$
  as in the proof of Theorem~\ref{theorem:main-moving} cannot be represented by a simple closed curve
  with respect to any identification $\pi_1(X) \iso\pi_1(\Sigma)$ for a punctured 
  surface $\Sigma$. More generally, any element $\ell$ satisfying only property 3 cannot map in
  $\pi_1(X)$ to a multiple of a simple closed curve.
\end{lemma}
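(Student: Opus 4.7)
The plan is to show that property 3 --- linear independence of the $G$-orbit $\{g\cdot[\tilde\ell]:g\in G\}$ --- fails whenever $\ell$ maps in $\pi_1(X)$ to a power of a primitive element, because in that case some nontrivial element of $G$ already fixes $[\tilde\ell]$. The first assertion of the lemma then follows at once: a homotopically nontrivial simple closed curve on a punctured surface $\Sigma$ represents a primitive conjugacy class in $\pi_1(\Sigma)$, so under any identification $\pi_1(X)\iso\pi_1(\Sigma)$ the loop $\ell$ corresponds to a primitive element $w\in F_n$. Property 2 requires that $w$ lift to a closed loop at the identity vertex of $Y$, equivalently $q(w)=1$, i.e.\ that $w$ lifts to $Y$ with degree $1$; the standing hypothesis forbids exactly this.

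For the more general assertion, suppose $\ell$ maps in $\pi_1(X)$ to $w^k$ with $w$ primitive and $k\geq 1$, and let $m$ be the order of $q(w)\in G$, so that $m\geq 2$ by hypothesis. Writing $\tilde w_0$ for the $1$-chain in $Y$ obtained by lifting $w$ as a path based at the identity vertex, the elevation $\tilde w$ of $w$ --- that is, the closed lift of $w^m$ at the identity --- is the concatenation of $m$ successive translates of $\tilde w_0$, and in particular equals the $1$-chain
\[\tilde w \;=\; \sum_{i=0}^{m-1} q(w)^i\cdot \tilde w_0.\]
Using $q(w)^m=1$ to reindex yields the chain-level identity
\[q(w)\cdot \tilde w \;=\; \sum_{i=1}^{m} q(w)^i\cdot \tilde w_0 \;=\; \tilde w,\]
so $q(w)$ fixes $[\tilde w]\in H_1(Y;\Q)$.

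Because $\ell=w^k$, its elevation $\tilde\ell$ traces $\tilde w$ exactly $k/\gcd(k,m)$ times, so $[\tilde\ell]$ is a nonzero rational multiple of $[\tilde w]$ and is likewise fixed by $q(w)$. If $[\tilde w]\neq 0$ then $q(w)\neq 1$ makes $(q(w)-1)\cdot[\tilde\ell]=0$ a nontrivial $\Q$-linear dependence among the vectors $\{g\cdot[\tilde\ell]:g\in G\}$, contradicting property 3; if instead $[\tilde w]=0$ then $[\tilde\ell]=0$ and property 3 fails trivially. The only step that is not bookkeeping is the cyclic-shift identity $q(w)\cdot\tilde w=\tilde w$, and the role of the hypothesis ``no primitive element lifts with degree $1$'' is precisely to guarantee $m\geq 2$, so that $q(w)-1$ is a nonzero element of the group ring $\Q[G]$ and the dependence above is actually nontrivial.
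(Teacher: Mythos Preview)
Your argument is correct and is essentially the paper's proof: both observe that since a simple closed curve $w$ is primitive and hence $q(w)\neq 1$ by hypothesis, the deck transformation $q(w)$ stabilizes the elevation of $w$ (equivalently, the full preimage of $w$ in the cover has fewer than $|G|$ components), so the $G$-orbit of $[\tilde\ell]$ cannot consist of $|G|$ linearly independent vectors. Your phrasing via the explicit dependence $(q(w)-1)[\tilde\ell]=0$ is a bit more algebraic than the paper's component count, and your separate shortcut for the first assertion via property~2 is a nice observation the paper does not make, but the core idea is identical.
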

\begin{proof}
  Let the identification $\pi_1(X) \iso\pi_1(\Sigma)$ be given, and let $\Sigma'\to\Sigma$
  be the cover of $\Sigma$ defined by $\pi_1(Y)$. Note that any element
  given by a simple closed curve in $\pi_1(\Sigma)$ is primitive. Hence, given any simple closed
  curve $\gamma$, the preimage of $\gamma$ in $\Sigma'$ consists of strictly less than
  $|G|$ elements; hence the span of these elements is of rank strictly less than $|G|$. 
  Thus, condition 3. of the defining properties of $\ell$ cannot hold.
\end{proof}
We remark that covers as in the lemma are plentiful; in particular any cover that covers the
mod-$n$ homology cover of $X$ has the desired property.

\small

\begin{tabular}{ll}
Benson Farb & Sebastian Hensel\\
Department of Mathematics & Mathematisches Institut\\ 
University of Chicago & Rheinische Friedrich-Wilhelms-Universit\"at Bonn\\
5734 University Ave. & Endenicher Allee 60\\
Chicago, IL 60637 & 53115 Bonn, Germany\\
E-mail: farb@math.uchicago.edu & E-mail: hensel@math.uni-bonn.de
\end{tabular}


\end{document}